\def\cal H{{\mathcal H}}
\def\C{\mathbb{C}}
\def\N{\mathbb{N}}
\def\dom{{\text{\rm dom\,}}}
\def\dist{{\text{\rm dist}}}
\def\phi{\varphi}
\DeclareMathOperator{\Real}{Re}
\DeclareMathOperator{\spann}{span}
\DeclareMathOperator{\diam}{diam}
\renewcommand{\theta}{\vartheta}
\newtheorem{theorem}{Theorem}[section]
\newtheorem*{thm*}{Theorem}
\newtheorem{proposition}[theorem]{Proposition}
\newtheorem{corollary}[theorem]{Corollary}
\theoremstyle{definition}
\newtheorem{example}[theorem]{Example}
\newtheorem*{ack}{Acknowledgement}
\numberwithin{equation}{section}
\title[Eigenvalue estimates for the Laplacian on a metric tree]{Eigenvalue estimates for the Laplacian on a metric tree}
\author[J.~Rohleder]{Jonathan Rohleder}
\address{TU Hamburg \\ Institut f\"ur Mathematik \\
Am Schwarzenberg-Campus~3 \\
Geb\"aude E \\
21073 Hamburg \\
Germany}
\email{jonathan.rohleder@tuhh.de}
\begin{document}

\begin{abstract}
We provide explicit upper bounds for the eigenvalues of the Laplacian on a finite metric tree subject to standard vertex conditions. The results include estimates depending on the average length of the edges or the diameter. In particular, we establish a sharp upper bound for the spectral gap, i.e.\ the smallest positive eigenvalue, and show that equilateral star graphs are the unique maximizers of the spectral gap among all trees of a given average length.
\end{abstract}

\maketitle

\section{Introduction}

Spectral theory of differential operators on metric graphs, so-called quantum graphs, has been a very active field of research in recent years, see, e.g., the monograph~\cite{BK13}. Amongst other topics, particular attention was paid to eigenvalue inequalities for Laplace and Schr\"odinger operators on metric graphs. For a selection of recent contributions in this field we refer the reader to~\cite{BK12,DR16,D15,DH10,EFK11,EJ12,KKT16,LP08,NS00} and the references therein. An often-considered model operator is the standard (sometimes also called free or Kirchhoff) Laplacian $- \Delta_G$ on a finite metric graph~$G$, i.e., the (negative) second derivative operator in $L^2 (G)$ subject to standard matching conditions at all vertices; cf.\ Section~\ref{sec:preliminaries} for the details. The selfadjoint operator $- \Delta_G$ and the properties of its purely discrete spectrum have been studied intensively and there has been considerable interest in explicit bounds for the eigenvalues 
\begin{align*}
 0 = \lambda_1 (G) < \lambda_2 (G) \leq \lambda_3 (G) \leq \dots
\end{align*}
of $- \Delta_G$ and, particularly, for the {\em spectral gap}, i.e., the first positive eigenvalue~$\lambda_2 (G)$, see~\cite{F05,K15,KMN13,KN14,N87}. In the recent paper~\cite{KKMM16} the estimate 
\begin{align}\label{eq:KKMMboundIntro}
 \lambda_2 (G) \leq \frac{|E|^2 \pi^2}{L (G)^2}
\end{align}
was established, where $|E|$ denotes the number of edges and $L (G)$ is the total length of $G$, and the class of maximizers of $\lambda_2 (G)$ among all graphs of a given average length $L (G) / |E|$ is specified. Moreover, it was shown in~\cite{KKMM16} that an upper estimate for the eigenvalues of $- \Delta_G$ in terms of the diameter of $G$ only cannot hold. 

In the present note we consider an important subclass of graphs, the class of trees, i.e., graphs without cycles. Our main aim is to expose two peculiarities of this class of graphs with respect to which they differ essentially from the general case. First, we prove that within the class of trees the estimate~\eqref{eq:KKMMboundIntro} can be improved and that, in fact,
\begin{align}\label{eq:averageIntro}
 \lambda_{k + 1} (G) \leq \frac{k^2 |E|^2 \pi^2}{4 L (G)^2}
\end{align}
holds for all $k \in \N$ if $G$ is any finite tree with $|E| \geq 2$. For the spectral gap of a tree this implies
\begin{align}\label{eq:gapToll}
 \lambda_2 (G) \leq \frac{|E|^2 \pi^2}{4 L (G)^2}.
\end{align}
Moreover, we show that equality in~\eqref{eq:gapToll} holds if and only if $G$ is an equilateral star graph. In other words, the equilateral star graphs are the unique maximizers of the spectral gap among all trees of a given average length.

Second, we observe that trees---in contrast to general graphs---admit an upper bound for the eigenvalues of $- \Delta_G$ in terms of the diameter only, namely,
\begin{align}\label{eq:diamIntro}
 \lambda_{k + 1} (G) \leq \frac{k^2 \pi^2}{\diam (G)^2}
\end{align}
for all $k \in \N$. This is related to the fact that the diameter of a tree is given by the length of a path which connects two boundary vertices. The proofs of both estimates~\eqref{eq:averageIntro} and~\eqref{eq:diamIntro} rely on a domain monotonicity principle for the eigenvalues of the free Laplacian on a graph.

In addition to these results we provide an eigenvalue bound of slightly different nature: We compare the eigenvalues of the standard Laplacian on a tree $G$ with those of the Dirichlet Laplacian on $G$, i.e.\ the Laplacian subject to a Dirichlet condition at each vertex. This leads to the estimate
\begin{align}\label{eq:soso}
 \lambda_{k + 1} (G) \leq \lambda_k^{\rm D}, \quad k \in \N,
\end{align}
for any finite tree $G$, where $\lambda_1^{\rm D} \leq \lambda_2^{\rm D} \leq \dots$ are the eigenvalues of the Dirichlet Laplacian. This can be viewed as a graph counterpart of the inequality between Dirichlet and Neumann eigenvalues of the Laplacian on a Euclidean domain found in~\cite{F91}, see also~\cite{F04}. We remark that the estimate~\eqref{eq:soso} can also be derived from Lemma~4.5 in~\cite{BBW15} with a totally different proof. However, the proof provided in the present paper is a little more direct and allows, in addition, to characterize the class of trees for which the estimate~\eqref{eq:soso} is strict for all $k$. Note that the eigenvalues of the Dirichlet Laplacian depend only on the edge lengths but are independent of the geometry of $G$ and can be computed explicitly. Therefore the estimate~\eqref{eq:soso} leads to explicit estimates for the eigenvalues of $- \Delta_G$. As an immediate consequence we get
\begin{align}\label{eq:juche}
 \lambda_{k + 1} (G) \leq \frac{k^2 \pi^2}{L_{\max}^2}, \quad k \in \N,
\end{align}
for any finite tree $G$, where $L_{\max}$ is the length of the longest edge in $G$. We provide an example which shows that the bound~\eqref{eq:juche} is sharp for the spectral gap, i.e.\ for $k = 1$. Our proof of~\eqref{eq:soso} is based on the construction of appropriate test functions similar to those used in the PDE case in~\cite{F04}.

\begin{ack}
The author wishes to thank Gregory Berkolaiko for drawing his attention to the eigenvalue inequalities contained in~\cite{BBW15}. Moreover, the author gratefully acknowledges financial support by the Austrian Science Fund (FWF), project P~25162-N26.
\end{ack}

\section{Preliminaries}\label{sec:preliminaries}

In this section we provide preliminaries on Laplacians on metric graphs; for more details we refer the reader to the monograph~\cite{BK13}. A finite metric graph is a discrete graph $G$ consisting of a finite set of vertices $V$ and a finite set of edges $E$ which is equipped, in addition, with a length function $L : E \to (0, \infty)$, which assigns a length to each edge. Any edge $e \in E$ is then identified with the interval $[0, L (e)]$, and this way of choosing coordinates induces a metric on $G$. In the following we simply write that $G$ is a graph and mean a metric graph. We denote the total length of a graph by $L (G) = \sum_{e \in E} L (e)$ and say that a graph $G$ is equilateral if the length function $L$ is constant on $E$. For any $e \in E$ we write $o (e)$ for the vertex from which the edge $e$ originates and $t (e)$ for the vertex at which $e$ terminates. Throughout this paper we will assume for simplicity that $G$ is connected, that is, each two vertices in $G$ are connected by a path; the obvious analogs of the results for disconnected graphs hold as well. Moreover, in most of what follows we assume that $G$ is a tree, i.e., $G$ does not contain cycles or, equivalently, $|V| = |E| + 1$. By a boundary vertex we mean a vertex of degree one; accordingly, a boundary edge is an edge which is incident to a boundary vertex. Vertices of degree two or larger are called interior vertices. 

On a finite, connected graph $G$ we denote by $L^2 (G)$ the space of square-integrable functions $f : G \to \C$, equipped with the standard norm and inner product. For $f \in L^2 (G)$ we denote by $f_e$ the restriction of $f$ to an edge $e \in E$, identified with a function on $[0, L (e)]$. Moreover, we consider the Sobolev spaces
\begin{align*}
 \widetilde H^k (G) = \bigoplus_{e \in E} H^k (e), \quad k = 1, 2, \dots,
\end{align*}
and say that a function $f \in \widetilde H^1 (G)$ is continuous on $G$ if and only if the limits of $f_e$ and $f_{\hat e}$ towards a vertex $v$ coincide whenever $e$ and $\hat e$ are edges incident to $v$. Accordingly we define
\begin{align*}
 H^1 (G) = \big\{ f \in \widetilde H^1 (G) : f~\text{is~continuous~on}~G \big\}.
\end{align*}
For $f \in H^1 (G) \cap \widetilde H^2 (G)$ we use the abbreviation
\begin{align*}
 \partial_\nu f (v) = \sum_{t (e) = v} f_{e}' (L (e)) - \sum_{o (e) = v} f_e' (0)
\end{align*}
for the sum of the normal derivatives of the edges incident to a vertex $v \in V$.

The object of our interest is the {\em standard Laplacian} $- \Delta_G$ in $L^2 (G)$ defined by
\begin{align*}
 (- \Delta_G f)_e & = - f_e'', \quad e \in E, \\
 \dom (- \Delta_G) & = \big\{ f \in H^1 (G) \cap \widetilde H^2 (G) : \partial_\nu f (v) = 0~\text{for~all}~v \in V \big\}.
\end{align*}
The operator $- \Delta_G$ is selfadjoint in $L^2 (G)$ and its spectrum consists of isolated, nonnegative eigenvalues with finite multiplicities. A nondecreasing enumeration of all eigenvalues, counted with multiplicities, is given by the min-max principle
\begin{align}\label{eq:minmax}
 \lambda_k (G) = \min_{\substack{F \subset H^1 (G) \\ \dim F = k}} \max_{\substack{f \in F \\ f \neq 0}} \frac{\int_G |f'|^2 d x}{\int_G |f|^2 d x}, \quad k = 1, 2, \dots;
\end{align}
cf.~\cite[Chapter~3]{BK13}. Note that in the formula~\eqref{eq:minmax} we have $\lambda_1 (G) = 0$, which is obtained by plugging in any constant function $f$ on~$G$. For later use we also point out the following consequence of~\eqref{eq:minmax}: Denote by $N_G (\iota)$ the number of eigenvalues of $- \Delta_G$, counted with multiplicities, within the real interval $\iota$, that is,
\begin{align*}
 N_G (\iota) = \# \left\{k \in \N : \lambda_k (G) \in \iota \right\}.
\end{align*}
Then
\begin{align}\label{eq:numberEV}
 N_G ([0, \mu]) = \max \Big\{\!\dim F : F \subset H^1 (G), \int_G |f'|^2 d x \leq \mu \int_G |f|^2 d x, f \in F \Big\}
\end{align}
holds for any $\mu \geq 0$.

In the following we shortly visit two basic classes of trees and the spectra of the corresponding standard Laplacians; they will play a role later.

\begin{example}\label{ex:path}
A {\em path graph} is a connected tree $G$ which contains only vertices of degrees one and two. Due to the standard vertex conditions of each $f \in \dom (- \Delta_G)$ the Laplacian $- \Delta_G$ on a path graph $G$ can be identified with the operator $- \frac{d^2}{d x^2}$ on an interval of length $L (G)$ subject to Neumann boundary conditions at both endpoints. Thus the nonzero eigenvalues of $- \Delta_G$ are given by
\begin{align*}
 \lambda_{k + 1} (G) = \frac{k^2 \pi^2}{L (G)^2}, \quad k \in \N,
\end{align*}
for any path graph $G$.
\end{example}

\begin{example}\label{ex:star}
A {\em star graph} is a tree $G$ with a vertex $v_0$ such that each edge $e \in E$ is incident to $v_0$. We assume $|E| \geq 2$ for any star graph. For an equilateral star graph with constant edge length $L = L (e)$ for each $e \in E$, a simple calculation shows that the spectrum of $- \Delta_G$ consists of the numbers
\begin{align*}
 \frac{m^2 \pi^2}{L^2} \quad \text{and} \quad \frac{(m + \frac{1}{2})^2 \pi^2}{L^2}, \quad m = 0, 1, 2, \dots,
\end{align*}
where each eigenvalue of the latter form has the multiplicity $|E| - 1$ and each eigenvalue of the form $\frac{m^2 \pi^2}{L^2}$ has multiplicity one. For the eigenvalues counted with multiplicities this means
\begin{align*}
 \lambda_{|E| j + 1} (G) = \frac{j^2 \pi^2}{L^2} \quad \text{and} \quad \lambda_{|E| j + 2} (G) = \dots = \lambda_{|E| j + |E|} (G) = \frac{(j + \frac{1}{2})^2 \pi^2}{L^2},
\end{align*}
$j = 0, 1, 2, \dots$ In particular, the spectral gap is given by $\lambda_2 (G) = \frac{\pi^2}{4 L^2} = \frac{|E|^2 \pi^2}{4 L (G)^2}$.
\end{example}

\section{Estimates involving the average length and the diameter}

In this section we provide estimates for the eigenvalues of the standard Laplacian on a tree in terms of the arithmetic mean of the edge lengths and in terms of the diameter. As a special case it turns out that among all trees of a given average length the equilateral star graphs are the unique maximizers of the spectral gap.

The proofs of the main results of this section are based on the following domain monotonicity principle. Following the terminology used in~\cite{KKMM16} we say that a {\em pendant graph} is attached to a given graph $H$ if an additional finite, connected graph is attached to precisely one vertex of $H$. We remark that the special case $\lambda_2 (G) \leq \lambda_2 (H)$ of the following proposition can also be derived from~\cite[Theorem~2]{KMN13} or~\cite[Lemma~2.3]{KKMM16}. 

\begin{proposition}\label{prop:monotonicity}
Let $H$ be a finite, connected graph and let $G$ be the graph which arises from attaching pendant graphs to vertices of $H$. Then
\begin{align}\label{eq:monotone}
 \lambda_k (G) \leq \lambda_k (H), \quad k \in \N.
\end{align}
If $H$ is a path graph and one of the boundary vertices of $H$ appears as an interior vertex in $G$ then the inequality is strict for all $k \geq 2$. 
\end{proposition}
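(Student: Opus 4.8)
The plan is to deduce \eqref{eq:monotone} directly from the min-max principle \eqref{eq:minmax} by turning any trial subspace for $H$ into a trial subspace for $G$ of the same dimension whose Rayleigh quotients are no larger; the natural device is extension by constants onto the pendant parts. Write $G = H \cup P_1 \cup \dots \cup P_m$, where $P_j$ is the pendant graph attached to a vertex $v_j$ of $H$, and set $\ell_j := L(P_j) \geq 0$ for its total length. For $f \in H^1(H)$ I would define $\iota f \in \widetilde H^1(G)$ by $(\iota f)|_H = f$ and $(\iota f)|_{P_j} \equiv f(v_j)$, the constant value of $f$ at the attachment vertex. Since $f$ is continuous on $H$ and the two traces agree at each $v_j$, the function $\iota f$ is continuous on $G$, hence $\iota f \in H^1(G)$; moreover $\iota$ is linear and injective, so $\dim \iota F = \dim F$ for every subspace $F \subset H^1(H)$. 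I will write $R_H(f) = \int_H |f'|^2\,dx \big/ \int_H |f|^2\,dx$ and similarly $R_G$.

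The key computation is that extension by a constant leaves the Dirichlet energy unchanged while enlarging the $L^2$-norm: for every $f \in H^1(H)$,
$$\int_G |(\iota f)'|^2\,dx = \int_H |f'|^2\,dx, \qquad \int_G |\iota f|^2\,dx = \int_H |f|^2\,dx + \sum_{j=1}^m \ell_j\,|f(v_j)|^2 \geq \int_H |f|^2\,dx,$$
because $(\iota f)'$ vanishes identically on each $P_j$. Hence $R_G(\iota f) \leq R_H(f)$. Feeding the minimising subspace $F$ for $\lambda_k(H)$ into \eqref{eq:minmax} for $G$ then gives
$$\lambda_k(G) \leq \max_{0 \neq \tilde f \in \iota F} R_G(\tilde f) \leq \max_{0 \neq f \in F} R_H(f) = \lambda_k(H),$$
which is \eqref{eq:monotone}.

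For the strictness statement I would take $F = \spann\{\phi_1,\dots,\phi_k\}$ spanned by the first $k$ eigenfunctions of the path $H$, parametrised as $[0,L(H)]$, so that $\phi_1 \equiv \mathrm{const}$ and $\phi_i(x) = \cos((i-1)\pi x/L(H))$ for $i \geq 2$ (Example~\ref{ex:path}). Let $v_0$ be the boundary vertex of $H$ that becomes interior in $G$; then the pendant at $v_0$ has positive length and $\phi_i(v_0) = \pm 1 \neq 0$ for every $i \geq 2$. The goal is to show the middle maximum above is \emph{strictly} below $\lambda_k(H)$. For $f = \sum_i c_i \phi_i \in F$, orthogonality gives $\int_H |f'|^2\,dx = \sum_i |c_i|^2 \lambda_i(H)\|\phi_i\|^2 \le \lambda_k(H)\int_H|f|^2\,dx$, so $R_G(\iota f) = \lambda_k(H)$ forces simultaneously $\int_H|f'|^2\,dx = \lambda_k(H)\int_H|f|^2\,dx$ and $f(v_0)=0$. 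Since the path spectrum is simple and strictly increasing, the first identity yields $\sum_i |c_i|^2(\lambda_k(H)-\lambda_i(H))\|\phi_i\|^2 = 0$ and hence $c_i = 0$ for all $i<k$, i.e.\ $f$ is a multiple of $\phi_k$; but then $f(v_0) = c_k\phi_k(v_0) \neq 0$ unless $f=0$, a contradiction. Therefore $\max_{0\neq \tilde f \in \iota F} R_G(\tilde f) < \lambda_k(H)$, whence $\lambda_k(G) < \lambda_k(H)$ for all $k \geq 2$.

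The only genuinely delicate point is this last step: the inequality $R_G(\iota f) \leq R_H(f)$ is strict for the individual top eigenfunction $\phi_k$, but the min-max forces one to control the maximum over the whole $k$-dimensional space $\iota F$, and a priori this maximum could still be attained, with value $\lambda_k(H)$, at some combination $f$ with $f(v_0)=0$. Ruling this out is exactly what simplicity of the path spectrum together with the non-vanishing of the eigenfunctions at the endpoints provides; for a general graph $H$ neither property need hold, which is precisely why strictness is asserted only in the path case.
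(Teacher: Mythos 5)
Your proposal is correct and follows essentially the same route as the paper: extend functions from $H$ to $G$ by constants on the pendant parts, observe that this preserves the Dirichlet energy while not decreasing the $L^2$-norm, and apply the min-max principle; for strictness, show that equality would force a $\lambda_k(H)$-eigenfunction of the path vanishing at the attachment vertex, which is impossible. The only cosmetic difference is that you rule this out via the explicit cosine eigenfunctions and simplicity of the path spectrum, whereas the paper uses the equivalent ODE uniqueness argument ($f(v)=f'(v)=0$ together with $-f''=\mu f$ forces $f\equiv 0$).
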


\begin{proof}
For $k = 1$ the inequality~\eqref{eq:monotone} is trivially satisfied. Let $k \geq 2$ and $\mu = \lambda_k (H) > 0$. Moreover, define $F = \spann \bigcup_{\lambda \leq \mu} \ker (- \Delta_H - \lambda)$. Then
\begin{align}\label{eq:usualEst}
 \int_H |f'|^2 d x \leq \mu \int_H |f|^2 d x, \quad f \in F.
\end{align}
For each $f \in F$ let $\widetilde f : G \to \C$ be such that $\widetilde f |_H = f$ and $\widetilde f = f (v)$ identically on the pendant graph attached to a vertex $v$ of $H$, if any. Then $\widetilde f \in H^1 (G)$, the derivative $\widetilde f'$ vanishes on $G \setminus H$ and, hence,
\begin{align}\label{eq:fastFertig}
 \int_G |\widetilde f'|^2 d x = \int_H |f'|^2 d x \leq \mu \int_H |f|^2 d x \leq \mu \int_G |\widetilde f|^2 d x
\end{align}
by~\eqref{eq:usualEst}. As $\dim F \geq k$,~\eqref{eq:monotone} follows with the help of the min-max principle~\eqref{eq:minmax}.

Let us now assume that $H$ is a path graph and that there exists a boundary vertex $v$ of $H$ which is an interior vertex of $G$.  We claim that the inequality~\eqref{eq:fastFertig} is strict for each nontrivial $f \in F$. In fact, if we assume that for some nontrivial $f \in F$ we have $\int_G |\widetilde f'|^2 d x = \mu \int_G |\widetilde f|^2 d x$ then in particular the last inequality in~\eqref{eq:fastFertig} must be an equality, which implies $\widetilde f |_{G\setminus H} = 0$ and, in particular, $f (v) = 0$. Moreover, $f'$ vanishes on $v$ as $f \in \dom (- \Delta_H)$ and $v$ is a boundary vertex. As $- \widetilde f'' = \mu \widetilde f$ on the path graph $H$, which can be identified with a single interval, it follows $f = 0$ identically on $H$, a contradiction. Thus $\int_G |\widetilde f'|^2 d x < \mu \int_G |\widetilde f|^2 d x$ for all nontrivial $f \in F$ and the min-max principle implies the second assertion. 
\end{proof}

With the help of Proposition~\ref{prop:monotonicity} we can prove the following theorem. Its proof extends an idea in the proof of~\cite[Theorem~4.2]{KKMM16}.

\begin{theorem}\label{thm:average}
Let $G$ be a finite, connected tree with $|E| \geq 2$. Then
\begin{align}\label{eq:yippieh}
 \lambda_{k + 1} (G) \leq \frac{k^2 |E|^2 \pi^2}{4 L (G)^2}, \quad k \in \N.
\end{align}
Moreover, for $k = 1$ equality holds in~\eqref{eq:yippieh} if and only if $G$ is any equilateral star; for $k \geq 2$ equality holds in~\eqref{eq:yippieh} if and only if $G$ is an equilateral star with $|E| = 2$.
\end{theorem}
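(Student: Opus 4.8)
The plan is to combine the domain monotonicity of Proposition~\ref{prop:monotonicity} with a metric estimate that is special to trees. Let $P$ be a path in $G$ of maximal length, joining two boundary vertices, so that $L(P)=\diam(G)$. Because $G$ is a tree, every component of $G$ obtained by removing the interior of $P$ is attached to $P$ at a single vertex, so $G$ arises from $P$ by attaching pendant graphs; Proposition~\ref{prop:monotonicity} then gives $\lambda_{k+1}(G)\le\lambda_{k+1}(P)$, and since $P$ is a path graph Example~\ref{ex:path} yields $\lambda_{k+1}(P)=k^2\pi^2/\diam(G)^2$. Hence \eqref{eq:yippieh} reduces to the metric inequality
\begin{align}\label{eq:diambound}
 \diam(G)\ge\frac{2L(G)}{|E|}.
\end{align}

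To prove \eqref{eq:diambound} I would traverse the tree along its contour. Fixing a planar embedding, let $z_1,\dots,z_b$ be the boundary vertices listed in the cyclic order in which a depth-first contour walk meets them. This walk runs along each edge exactly twice, and the arc between two consecutive leaves $z_j$ and $z_{j+1}$ is the unique path joining them in $G$; summing cyclically therefore gives $\sum_{j=1}^b\dist(z_j,z_{j+1})=2L(G)$. By averaging, some consecutive pair satisfies $\dist(z_j,z_{j+1})\ge2L(G)/b$, and as $\diam(G)$ dominates every leaf-to-leaf distance we obtain $\diam(G)\ge2L(G)/b$. Since a tree with $|E|\ge2$ has at least one interior vertex, the number of boundary vertices obeys $b=|E|+1-i\le|E|$, where $i\ge1$ is the number of interior vertices, and \eqref{eq:diambound} follows. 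Replacing each edge by the two passes of the contour is precisely the mechanism that upgrades the factor in the general bound \eqref{eq:KKMMboundIntro} to the factor $\frac{1}{4}$ in \eqref{eq:yippieh}.

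The equality discussion is the part I expect to be delicate, since tightness in \eqref{eq:yippieh} must be tracked through both the monotonicity step and \eqref{eq:diambound} at once. Equality forces $\diam(G)=2L(G)/|E|$, and the chain above then forces $b=|E|$, hence a single interior vertex, so $G$ is a star. Writing its edge lengths as $\ell_1,\dots,\ell_n$ with $n=|E|$, one has $\diam(G)=\ell_{(1)}+\ell_{(2)}$ for the two largest lengths, and the forced identity $\ell_{(1)}+\ell_{(2)}=2L(G)/n=\frac{2}{n}\sum_i\ell_i$, together with $2\ell_{(2)}\le\ell_{(1)}+\ell_{(2)}$, collapses all lengths to a common value when $n\ge3$; thus $G$ is an equilateral star. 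The direction of the equivalence and the dichotomy between the two regimes then follow from the explicit spectrum in Example~\ref{ex:star}: an equilateral star satisfies $\lambda_2(G)=|E|^2\pi^2/(4L(G)^2)$, so \eqref{eq:yippieh} is attained for $k=1$ and every admissible $|E|$, whereas for $k\ge2$ the target value $k^2|E|^2\pi^2/(4L(G)^2)$ coincides with $\lambda_{k+1}(G)$ only when $|E|=2$. The principal obstacles are thus the simultaneous saturation analysis of the two inequalities and the careful bookkeeping of the star spectrum for $k\ge2$; the strictness clause of Proposition~\ref{prop:monotonicity} is the natural device for excluding nontrivial pendant graphs at this final stage.
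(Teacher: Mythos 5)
Your proof is correct, and it takes a genuinely different route from the paper's. The paper selects the two longest edges $e_1,e_2$, takes $H$ to be the unique path containing both, and uses $L(H)\ge L(e_1)+L(e_2)\ge 2L(G)/|E|$; since that path may terminate at interior vertices of $G$, the equality discussion there must lean on the strictness clause of Proposition~\ref{prop:monotonicity}. You instead take $H$ to be a diameter path and insert the purely combinatorial inequality $\diam(G)\ge 2L(G)/|E|$, proved via the contour-walk identity $\sum_{j}\dist(z_j,z_{j+1})=2L(G)$ over the cyclically ordered leaves together with $b\le |E|$; both the identity (each subtree skipped between consecutive leaves would contain a further leaf) and the count $b=|V|-i\le |E|$ are sound. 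This buys two things: it exhibits Theorem~\ref{thm:average} as a corollary of the diameter bound of Theorem~\ref{thm:diameter}, which the paper proves separately, and it makes the equality analysis cleaner --- saturation of the metric inequality alone forces $b=|E|$, hence a single interior vertex, hence a star, and then equilaterality for $|E|\ge 3$, after which Example~\ref{ex:star} finishes the case distinction in $k$. In particular the strictness clause of Proposition~\ref{prop:monotonicity}, which you invoke at the end, is not actually needed on your route. One caveat, which you share with the paper (compare the discussion of the second inequality in \eqref{eq:einmal} there): for $|E|=2$ the collapse to equal lengths is vacuous, and a non-equilateral two-edge path does attain equality in \eqref{eq:yippieh} for every $k$, its standard Laplacian being the Neumann Laplacian on an interval of length $L(G)$; the stated characterization in that degenerate case therefore holds only up to removal of the spectrally invisible degree-two vertex, an imprecision of the statement rather than of your argument.
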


\begin{proof}
Let $e_1, e_2$ be two distinct edges in $G$ such that $e_1 \geq e_2 \geq e$ holds for each $e \in E \setminus \{e_1\}$ and let $H$ be the unique path through $G$ which starts with $e_1$ and terminates with $e_2$. Note that the arithmetic mean of $L (e_1)$ and $L (e_2)$ is greater or equal to $L (G) / |E|$ and thus
\begin{align}\label{eq:einmal}
 \lambda_{k + 1} (H) = \frac{k^2 \pi^2}{L (H)^2} \leq \frac{k^2 \pi^2}{( L(e_1) + L (e_2) )^2} \leq \frac{k^2 |E|^2 \pi^2}{4 L (G)^2},
\end{align}
see Example~\ref{ex:path}. Observe that $G$ can be obtained by attaching pendant graphs to the vertices of $H$ and thus Proposition~\ref{prop:monotonicity} together with~\eqref{eq:einmal} yields
\begin{align}\label{eq:undNochmal}
 \lambda_{k + 1} (G) \leq \lambda_{k + 1} (H) \leq \frac{k^2 |E|^2 \pi^2}{4 L (G)^2},
\end{align}
which is the first assertion of the theorem. Note further that by Proposition~\ref{prop:monotonicity} the first inequality in~\eqref{eq:undNochmal} is strict if $H$ does not connect boundary vertices of $G$, i.e., if at least one of the edges $e_1$ and $e_2$ is not incident to a boundary vertex. Moreover, note that the first inequality in~\eqref{eq:einmal} is strict if $H$ contains edges additional to $e_1$ and $e_2$, i.e., if $e_1$ and $e_2$ are not incident to a joint vertex, and that the second inequality in~\eqref{eq:einmal} is strict if the arithmetic mean of $L (e_1)$ and $L (e_2)$ is not equal to the arithmetic mean of all edge lenghts, i.e., whenever $G$ is not equilateral. Summing up, equality in~\eqref{eq:undNochmal} is only possible if $G$ is equilateral and $e_1$ and $e_2$ are boundary edges which are incident to a joint vertex. But in the equilateral case the above reasoning is true for any choice of distinct edges $e_1, e_2$ of $G$. Hence equality in~\eqref{eq:undNochmal} is only possible if $G$ is equilateral and any two edges in $G$ are boundary edges which are incident to a joint vertex, that is, if $G$ is an equilateral star graph. It follows from the explicit representation of the eigenvalues of an equilateral star in Example~\ref{ex:star} that for $k = 1$ equality holds for any equilateral star and that for $k \geq 2$ this is only true if $G$ is an equilateral star consisting of only two edges. This completes the proof of the theorem.
\end{proof}

For the spectral gap of a finite tree we obtain the following corollary.

\begin{corollary}\label{cor:spectralGap}
For each finite, connected tree with $|E| \geq 2$ the spectral gap satisfies
\begin{align*}
 \lambda_2 (G) \leq \frac{|E|^2 \pi^2}{4 L (G)^2}.
\end{align*}
Moreover, equality holds if and only if $G$ is any equilateral star graph.
\end{corollary}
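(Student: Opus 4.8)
The plan is to derive both assertions as the special case $k = 1$ of Theorem~\ref{thm:average}. The hypotheses coincide exactly: the corollary assumes that $G$ is a finite, connected tree with $|E| \geq 2$, which is precisely the standing assumption of Theorem~\ref{thm:average}. Setting $k = 1$ in the bound~\eqref{eq:yippieh}, the numerator $k^2$ becomes $1$, so the inequality reads $\lambda_2 (G) \leq \frac{|E|^2 \pi^2}{4 L (G)^2}$. Here I would only remark that $\lambda_{k+1} (G)$ with $k = 1$ is indeed the spectral gap $\lambda_2 (G)$, the first positive eigenvalue in the enumeration introduced after~\eqref{eq:minmax}. This already gives the quantitative part of the corollary with no further work.

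For the equality characterization, I would simply quote the corresponding clause of Theorem~\ref{thm:average}: for $k = 1$, equality in~\eqref{eq:yippieh} holds if and only if $G$ is any equilateral star. Since the corollary's equality statement is verbatim this clause, nothing beyond the specialization is needed. There is therefore no genuine obstacle to surmount; all the substantive content---the two-edge path comparison in~\eqref{eq:einmal}, the domain monotonicity of Proposition~\ref{prop:monotonicity}, and the case analysis tracing which inequalities can be sharp---is already carried out in the proof of the theorem, and the present statement is an immediate corollary. The only point meriting a brief sentence is that, in the equality case for $k = 1$, the value $\frac{|E|^2 \pi^2}{4 L (G)^2}$ agrees with the spectral gap $\lambda_2 = \frac{\pi^2}{4 L^2}$ of an equilateral star computed in Example~\ref{ex:star}, which confirms that every equilateral star does attain the bound rather than merely being a candidate for equality.
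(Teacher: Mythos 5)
Your proposal is correct and matches the paper's approach exactly: the corollary is the $k=1$ case of Theorem~\ref{thm:average}, including the equality characterization, and the paper offers no separate proof beyond this specialization.
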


Next we come to estimates involving the diameter of the graph. Recall that the diameter $\diam (G)$ of a graph $G$ is defined as
\begin{align*}
 \diam (G) = \sup \left\{ \dist (x, y) : x, y \in G \right\},
\end{align*}
where $\dist (x, y)$ is the distance of two arbitrary points on the graph with respect to the metric induced by the parametrization of the edges. In particular, for a tree the diameter coincides with the length of the longest path between two boundary vertices.

It was shown in~\cite{KKMM16} that among all finite graphs of a given diameter there is no maximizer for the spectral gap: the authors of~\cite{KKMM16} construct a sequence of graphs $G_n$ of a fixed diameter such that $\lambda_2 (G_n) \to \infty$ as $n \to \infty$. However, the situation is different within the class of finite trees, which can be observed in the following easy way. 

\begin{theorem}\label{thm:diameter}
Let $G$ be a finite, connected tree. Then
\begin{align*}
 \lambda_{k + 1} (G) \leq \frac{k^2 \pi^2}{\diam (G)^2}
\end{align*}
holds for all $k \in \N$.
\end{theorem}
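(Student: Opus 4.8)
The plan is to mirror the proof of Theorem~\ref{thm:average} almost verbatim, replacing the role played there by the two longest edges with a single path that realizes the diameter. The crucial geometric input, noted in the text preceding the statement, is that for a finite tree the diameter is attained as the length of a path connecting two boundary vertices. First I would make this precise: since $G$ is compact, the supremum defining $\diam (G)$ is attained at some pair of points $x_0, y_0 \in G$, and I would argue that both must be boundary vertices. Indeed, the distance from a fixed point is piecewise linear and nondecreasing as one moves outward along the unique path away from that point; if $x_0$ were not a vertex of degree one, one could continue past it and strictly increase the distance to $y_0$, contradicting maximality. Hence $x_0$ and $y_0$ are boundary vertices, and I let $H$ denote the unique path in $G$ joining them, so that $L (H) = \diam (G)$.

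The second ingredient is the observation that $G$ arises from $H$ by attaching pendant graphs. Because $G$ is a tree, deleting the open edges of $H$ decomposes the remainder of $G$ into connected components, each of which is incident to exactly one vertex of $H$ (a single point of contact, since a second point of contact would create a cycle). These components are precisely the pendant graphs in the sense of Proposition~\ref{prop:monotonicity}, so $G$ is obtained from the path graph $H$ by attaching pendant graphs to its vertices.

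With these two facts in hand the estimate is immediate. By Example~\ref{ex:path} the nonzero eigenvalues of the path graph $H$ are
\begin{align*}
 \lambda_{k + 1} (H) = \frac{k^2 \pi^2}{L (H)^2} = \frac{k^2 \pi^2}{\diam (G)^2}, \quad k \in \N,
\end{align*}
and Proposition~\ref{prop:monotonicity} applied to the attachment of the pendant graphs yields $\lambda_{k + 1} (G) \leq \lambda_{k + 1} (H)$ for all $k$. Combining the two gives the claimed bound.

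I expect no serious obstacle; the entire argument reduces to the two structural observations about trees above, and the analytic content is supplied wholesale by Proposition~\ref{prop:monotonicity} and Example~\ref{ex:path}. The only point requiring a little care is the rigorous justification that the diameter of a tree is realized between two boundary vertices rather than at interior points of edges, which is where the tree structure (uniqueness of paths and absence of cycles) is genuinely used.
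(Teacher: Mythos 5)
Your argument is correct and is essentially identical to the paper's own proof: both take the path $H$ realizing the diameter between two boundary vertices, observe that $G$ is obtained by attaching pendant graphs to vertices of $H$, and combine Proposition~\ref{prop:monotonicity} with Example~\ref{ex:path}. The only difference is that you spell out the justification that the diameter of a tree is attained between boundary vertices, which the paper simply asserts.
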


\begin{proof}
As $G$ is a tree, the diameter is attained by a path $H$ through $G$ which connects two boundary vertices. Moreover, $G$ can be obtained by attaching pendant trees to vertices of $H$. Thus Proposition~\ref{prop:monotonicity} implies
\begin{align*}
 \lambda_{k + 1} (G) \leq \lambda_{k + 1} (H) = \frac{k^2 \pi^2}{L (H)^2} = \frac{k^2 \pi^2}{\diam (G)^2}
\end{align*}
for all $k \in \N$; cf.\ Example~\ref{ex:path}. This is the assertion of the theorem.
\end{proof}

We remark that equality in Theorem~\ref{thm:diameter} holds for each path graph and that for $k = 1$ equality is also satisfied for each equilateral star; cf.~Example~\ref{ex:star}.

\section{Estimates involving Dirichlet eigenvalues}

In this section we prove estimates for the eigenvalues of $- \Delta_G$ for a tree $G$ in terms of the Dirichlet eigenvalues of $G$. For the following theorem let $- \Delta_G^{\rm D}$ be the selfadjoint Dirichlet Laplacian in $L^2 (G)$, i.e.,
\begin{align*}
 (- \Delta_G^{\rm D} f)_e & = - f_e'', \quad e \in E,\\
 \dom (- \Delta_G^{\rm D}) & = \big\{f \in \widetilde H^2 (G) \cap H^1 (G) : f (v) = 0~\text{for all}~v \in V \big\},
\end{align*}
and denote by $\lambda_1^{\rm D} (G) \leq \lambda_2^{\rm D} (G) \leq \dots$ its eigenvalues, counted with multiplicities. Note that for any given tree $G$ these eigenvalues can be computed explicitly. They are given by the numbers
\begin{align}\label{eq:DirichletEV}
 \frac{m^2 \pi^2}{L (e)^2}, \quad e \in E, m = 1, 2, \dots, 
\end{align}
where eigenvalues appearing several times in~\eqref{eq:DirichletEV} have respective multiplicities. In particular, the eigenvalues of $- \Delta_G^{\rm D}$ depend only on the edge lengths of $G$ and ignore the geometry of the graph. Note that variational inequalities immediately imply
\begin{align*}
 \lambda_k (G) \leq \lambda_k^{\rm D} (G)
\end{align*}
for all $k \in \N$. This bound is not sharp even in the case of a graph consisting of a single interval of length $L$, where $\lambda_k (G) = (k - 1)^2 \pi^2 / L^2$ but $\lambda_k^{\rm D} (G) = k^2 \pi^2 / L^2$. In fact, for any tree it can be improved as follows; see also~\cite[Lemma~4.5]{BBW15}. 

\begin{theorem}\label{thm:GD}
Let $G$ be a finite, connected tree. Then
\begin{align}\label{eq:Friedlander}
 \lambda_{k + 1} (G) \leq \lambda_k^{\rm D} (G)
\end{align}
holds for all $k \in \N$. Moreover, the inequality~\eqref{eq:Friedlander} is strict for all $k \in \N$ if and only if $G$ contains two edges with rationally independent lengths.
\end{theorem}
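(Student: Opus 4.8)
The plan is to prove the inequality \eqref{eq:Friedlander} by a test-function argument in the spirit of Friedlander's comparison of Neumann and Dirichlet eigenvalues, exploiting the characterization \eqref{eq:numberEV} of the eigenvalue-counting function. Fix $\mu = \lambda_k^{\rm D}(G)$. By \eqref{eq:numberEV} applied to $-\Delta_G$ it suffices to exhibit a subspace $F \subset H^1(G)$ of dimension at least $k+1$ on which $\int_G |f'|^2\,dx \leq \mu \int_G |f|^2\,dx$. The natural building blocks are the Dirichlet eigenfunctions associated with eigenvalues $\leq \mu$: on each edge $e$ the Dirichlet eigenfunctions are the sine functions $\sin(m\pi x / L(e))$, which vanish at both endpoints of $e$ and satisfy the eigenvalue equation $-\psi'' = (m^2\pi^2/L(e)^2)\psi$. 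Since these vanish at all vertices, each such edgewise eigenfunction extended by zero to the rest of $G$ lies in $H^1(G)$ (continuity at vertices is automatic because the function is $0$ there) and contributes a Rayleigh quotient exactly equal to its Dirichlet eigenvalue, hence $\leq \mu$.

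The counting of these Dirichlet building blocks gives a space of dimension exactly $N^{\rm D}_G([0,\mu]) \geq k$, which is one short of the required $k+1$. The key extra step is to enlarge this space by one dimension using the constant function $\mathbbm{1}_G$, whose Rayleigh quotient is $0 \leq \mu$. First I would verify that the constant function is linearly independent from the span of the (zero-at-vertices) Dirichlet eigenfunctions: this is clear because every Dirichlet eigenfunction vanishes at the vertices whereas $\mathbbm{1}_G$ does not. Adjoining $\mathbbm{1}_G$ therefore yields a subspace $F$ with $\dim F \geq k+1$, still satisfying the Rayleigh bound with constant $\mu$ on the whole of $F$ — here one uses that all generators simultaneously obey the pointwise-in-the-quadratic-form estimate, so any linear combination does too once one checks the cross terms, which is immediate since the Dirichlet eigenfunctions are mutually orthogonal both in $L^2$ and in the Dirichlet form, and the constant is orthogonal to each $\sin(m\pi x/L(e))$ on each edge. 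Then \eqref{eq:numberEV} gives $\lambda_{k+1}(G) \leq \mu = \lambda_k^{\rm D}(G)$.

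For the sharp equality/strictness characterization I would analyze when the inequality can be saturated. Equality $\lambda_{k+1}(G) = \lambda_k^{\rm D}(G)$ for some $k$ should force an eigenfunction of $-\Delta_G$ at level $\lambda_k^{\rm D}(G)$ that is built from the Dirichlet pieces, and this in turn imposes a matching condition at the vertices forcing the relevant edge lengths to be rationally related. Concretely, if $G$ has all edge lengths pairwise rationally dependent then all edge lengths are integer multiples of a common value, so $G$ behaves spectrally like an equilateral subdivision and the Dirichlet and shifted standard spectra interlace with coincidences, producing equality for infinitely many $k$; conversely, if two edges have rationally independent lengths then the two corresponding Dirichlet ladders $\{m^2\pi^2/L(e)^2\}$ share no common value beyond the trivial level, which I expect to be exactly the obstruction that breaks any equality and makes the inequality strict for every $k$. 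I anticipate the main obstacle to be the \emph{strictness} direction: showing that rational independence of two edge lengths genuinely forces strict inequality for \emph{all} $k$ simultaneously, which requires a careful argument that no standard eigenfunction at level $\lambda_k^{\rm D}(G)$ can exist, rather than the comparatively routine construction establishing $\leq$.
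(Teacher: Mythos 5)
There is a genuine gap at the heart of your construction: the constant function cannot supply the extra dimension. For the counting argument via \eqref{eq:numberEV} you need \emph{every} element of the enlarged subspace $F + \spann\{\mathbbm{1}_G\}$ to satisfy $\int_G |u'|^2\,dx \leq \mu \int_G |u|^2\,dx$, and your justification of the cross terms rests on the claim that the constant is orthogonal to each $\sin(m\pi x/L(e))$. That is false for odd $m$: $\int_0^L \sin(m\pi x/L)\,dx = \tfrac{L}{m\pi}\bigl(1-(-1)^m\bigr) = \tfrac{2L}{m\pi} \neq 0$. The inequality then genuinely fails on the span. Already for $G$ a single interval $[0,L]$, $k=1$, $\mu=\pi^2/L^2$, take $u = 1 - \sin(\pi x/L)$: then
\begin{align*}
 \int_0^L |u'|^2\,dx = \frac{\pi^2}{2L}, \qquad \int_0^L |u|^2\,dx = \Bigl(\frac{3}{2}-\frac{4}{\pi}\Bigr) L \approx 0.227\,L,
\end{align*}
so the Rayleigh quotient of $u$ is about $2.2\,\mu$. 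Hence your subspace does not witness $N_G([0,\mu]) \geq k+1$, and the main inequality is not established by your argument.

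The paper's proof repairs exactly this point with Filonov's trick adapted to trees: instead of $\mathbbm{1}_G$ it adjoins the unimodular function $g_e(x) = e^{i\sqrt{\mu}\,(\dist(o(e),v_0)+x)}$, which is continuous on $G$ because distances to a fixed root are well defined on a tree, and which satisfies $-g'' = \mu g$ and $g' = i\sqrt{\mu}\,g$ edgewise. The eigenvalue equation for $g$ at the \emph{same} level $\mu$ is what makes the cross terms with the Dirichlet eigenfunctions cancel after integration by parts (the boundary terms vanish because elements of $F$ vanish at all vertices), yielding $\int_G |f'+\eta g'|^2\,dx \leq \mu\int_G|f+\eta g|^2\,dx$ on the whole span; linear independence follows from $|g|\equiv 1$. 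The constant function only plays this role when $\mu = 0$. Separately, your treatment of the strictness characterization is a statement of intent rather than a proof: the forward direction requires showing $(F+\spann\{g\})\cap\ker(-\Delta_G-\mu)=\{0\}$ (done in the paper by successively peeling off boundary edges using rational independence), and the converse requires exhibiting equality for some $k$ when all lengths are commensurable (done via Dirichlet--Neumann bracketing on the edges). Neither step is carried out in your proposal.
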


\begin{proof}
Let $k \in \N$ be arbitrary and let $\mu = \lambda_k^{\rm D} (G) > 0$. Moreover, define
\begin{align*}
 F = \spann \bigcup_{\lambda \leq \mu} \ker \big(- \Delta_G^{\rm D} - \lambda \big) \subset H^1 (G).
\end{align*}
Then $\dim F \geq k$ and
\begin{align}\label{eq:formMu}
 \int_G |f'|^2 d x \leq \mu \int_G |f|^2 d x, \quad f \in F.
\end{align}
Furthermore, let us define a function $g : G \to \C$ in the following way. Fix an arbitrary boundary vertex $v_0$ of $G$ and assume without loss of generality that the edges of $G$ are parametrized in the direction away from $v_0$, i.e., $\dist (o (e), v_0) < \dist (t (e), v_0)$ for each $e \in E$. For each $e \in E$ define
\begin{align}\label{eq:g}
 g_e (x) = e^{i \sqrt{\mu} (\dist (o (e), v_0) + x)}, \quad x \in [0, L (e)].
\end{align}
Then for each edge $e \in E$ we have 
\begin{align*}
 g_e (0) = e^{i \sqrt{\mu} \dist (o (e), v_0)} \quad \text{and} \quad g_e (L (e)) = e^{i \sqrt{\mu} \dist (t (e), v_0)},
\end{align*}
which implies that $g$ is continuous on $G$. Thus $g \in H^1 (G)$. Clearly, $g$ satisfies
\begin{align}\label{eq:wichtig}
 g' = i \sqrt{\mu} g \quad \text{and} \quad - g'' = \mu g
\end{align}
inside each edge.

Let now $f \in F$, $\eta \in \C$ and $h \in \ker (- \Delta_G - \mu)$. (Note that the latter space may be trivial.) Then integration by parts yields
\begin{align*}
 \int_G |f' + \eta g' + h'|^2 d x & = \int_G \big( |f'|^2 + |\eta g'|^2 + |h'|^2 \big) d x + 2 \Real \int_G (\eta g' + h') \overline{f'} d x \\
 & \quad + 2 \Real \int_G \eta g' \overline{h'} d x \\
 & = \int_G \big( |f'|^2 + |\eta g'|^2 + |h'|^2 \big) d x + 2 \Real \int_G (- \eta g'' - h'') \overline{f} d x \\
 & \quad + 2 \Real \int_G \eta g \overline{(- h'')} d x,
\end{align*}
where we have used $f (v) = \partial_\nu h (v) = 0$ for all $v \in V$. With the help of~\eqref{eq:formMu} and~\eqref{eq:wichtig} we conclude
\begin{align}\label{eq:jetztGehtsLos}
\begin{split}
 \int_G |f' + \eta g' + h'|^2 d x & \leq \mu \Big( \int_G \big( |f|^2 + |\eta g|^2 + |h|^2 \big) d x + 2 \Real \int_G (\eta g + h) \overline{f} d x \\
 & \qquad + 2 \Real \int_G \eta g \overline{h} d x \Big) \\
 & = \mu \int_G |f + \eta g + h|^2 d x.
\end{split}
\end{align}
Note that $|g (x)| = 1$ for all $x \in G$ and, hence, $g \notin F$. Thus~\eqref{eq:jetztGehtsLos} with $h = 0$ implies
\begin{align*}
 N_G ([0, \mu]) \geq \dim (F + \spann \{g\}) \geq k + 1;
\end{align*}
cf.~\eqref{eq:numberEV}. As $\mu = \lambda_k^{\rm D} (G)$, this implies the first assertion of the theorem.

Let us now come to the case that there exist two edges in $G$ with rationally independent edge lengths. Observe first that
\begin{align}\label{eq:linUnab}
 (F + \spann \{g\}) \cap \ker (- \Delta_G - \mu) = \{0\}.
\end{align}
Indeed, assume that for some $f \in F$ and $\eta \in \C$ we have $h := f + \eta g \in \ker (- \Delta_G - \mu)$. Then $f \in \ker (- \Delta_G^{\rm D} - \mu)$ and $f' + \eta g'$ vanishes on each boundary vertex of $G$. If there exists a boundary vertex $v$ with edge $e$ incident to $v$ such that $\sqrt{\mu} \notin \frac{\pi}{L (e)} \N$ then $f$ vanishes on $e$ and thus the derivative of $\eta g$ must vanish at $v$, which is only possible if $\eta = 0$. If, conversely, $\sqrt{\mu} \in \frac{\pi}{L (e)} \N$ for each boundary edge $e$ then the restriction of $f + \eta g$ to the tree $G'$ obtained from $G$ by removing all boundary edges belongs to $\ker (- \Delta_{G'} - \mu)$. Again we conclude $\eta = 0$ if $\sqrt{\mu} \notin \frac{\pi}{L (e)} \N$ for some boundary edge $e$ of $G'$ or we remove all boundary edges from $G'$. Since $G$ is a finite tree and contains two edges with rationally independent edges, after finitely many repetitions of this procedure we arrive at a boundary edge $e$ of a subgraph of $G$ such that $\sqrt{\mu} \notin \frac{\pi}{L (e)} \N$ and obtain $\eta = 0$, i.e., $f = h \in \ker (- \Delta_G - \mu)$. But this implies $f = 0$. Indeed, $f$ and $f'$ must both vanish at each boundary vertex of $G$ and hence must be zero identically on each boundary edge. Applying the standard vertex conditions, the same holds for all boundary edges of the tree obtained from $G$ by removing all boundary edges and then, successively, for each edge of $G$. As $G$ is a tree this can be repeated until no edges are left; hence $h = f = 0$ and we have shown~\eqref{eq:linUnab}. Using this and~\eqref{eq:jetztGehtsLos} we obtain
\begin{align*}
 N_G ([0, \mu]) \geq \dim F + 1 + \dim \ker (- \Delta_G - \mu) \geq k + 1 + \dim \ker (- \Delta_G - \mu),
\end{align*}
which leads to
\begin{align*}
 N_G ( [0, \mu) ) = N_G ([0, \mu]) - \dim \ker (- \Delta_G - \mu) \geq k + 1.
\end{align*}
As $\mu = \lambda_k^{\rm D} (G)$ this yields $\lambda_{k + 1} (G) < \lambda_k^{\rm D} (G)$.

Let now the edge lengths of $G$ be pairwise rationally dependent. Then there exists a real $x > 0$ such that $x L (e) \in \N$ for each $e \in E$. For each $e$ denote by $T_e^{\rm N}$ the selfadjoint operator $- \frac{d^2}{d x^2}$ in $L^2 (e)$ subject to Neumann boundary conditions and let $- \Delta_G^{\rm N}$ be the direct sum of the operators $T_e^{\rm N}$. Then $- \Delta_G^{\rm N}$ is selfadjoint in $L^2 (G)$ and its eigenvalues, counted with multiplicities, are given by
\begin{align}\label{eq:minMaxNeumann}
 \lambda_k^{\rm N} (G) = \min_{\substack{F \subset \widetilde H^1 (G) \\ \dim F = k}} \max_{\substack{f \in F \\ f \neq 0}} \frac{\int_G |f'|^2 d x}{\int_G |f|^2 d x}, \quad k \in \N.
\end{align}
On the one hand,~\eqref{eq:minMaxNeumann} and the inequality~\eqref{eq:Friedlander} obtained in the first part of the proof yield
\begin{align}\label{eq:vonWegen}
 \lambda_{k + 1}^{\rm N} (G) \leq \lambda_{k + 1} (G) \leq \lambda_k^{\rm D} (G) = \lambda_{k + |E|}^{\rm N} (G)
\end{align}
for each $k \in \N$. On the other hand, note that for each $e \in E$ the number $x^2 \pi^2 = \frac{(x L (e))^2 \pi^2}{L (e)^2}$ is an eigenvalue of $T_e$ and $T_e$ has precisely $x L (e)$ eigenvalues strictly below $x^2 \pi^2$. Hence $- \Delta_G^{\rm N}$ has precisely $\sum_{e \in E} (x L (e)) = x L (G)$ eigenvalues strictly below $x^2 \pi^2$ and $x^2 \pi^2$ is eigenvalue of $- \Delta_G^{\rm N}$ with multiplicity $|E|$. Thus
\begin{align}\label{eq:vielGleich}
 \lambda_{x L (G) + 1}^{\rm N} (G) = \dots = \lambda_{x L (G) + |E|}^{\rm N} (G) = x^2 \pi^2.
\end{align}
Comparing~\eqref{eq:vonWegen} with~\eqref{eq:vielGleich} implies equality in~\eqref{eq:Friedlander} for $k = x L (G)$. This completes the proof.
\end{proof}

A particularly simple consequence of Theorem~\ref{thm:GD} is the following estimate; here $L_{\max}$ denotes the length of the longest edge in $G$.

\begin{corollary}\label{cor:explicit}
Let $G$ be a finite, connected tree. Then the estimate
\begin{align}\label{eq:firstEstimate}
 \lambda_{k + 1} (G) \leq \frac{k^2 \pi^2}{L_{\max}^2}
\end{align}
holds for all $k \in \N$. If $G$ contains two edges with rationally independent lengths then the inequality~\eqref{eq:firstEstimate} is strict for all $k \in \N$.
\end{corollary}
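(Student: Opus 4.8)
The plan is to derive Corollary~\ref{cor:explicit} directly from Theorem~\ref{thm:GD}, combined with the explicit description~\eqref{eq:DirichletEV} of the Dirichlet eigenvalues. The inequality~\eqref{eq:Friedlander} already supplies $\lambda_{k+1}(G) \leq \lambda_k^{\rm D}(G)$ for all $k \in \N$, so the entire task reduces to the elementary estimate $\lambda_k^{\rm D}(G) \leq k^2\pi^2/L_{\max}^2$. Chaining these two bounds then yields $\lambda_{k+1}(G) \leq \lambda_k^{\rm D}(G) \leq k^2\pi^2/L_{\max}^2$, which is~\eqref{eq:firstEstimate}.

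To bound $\lambda_k^{\rm D}(G)$ from above, I would run a short counting argument on the multiset~\eqref{eq:DirichletEV}. Let $e_0 \in E$ be an edge of maximal length, so that $L(e_0) = L_{\max}$. This single edge contributes the numbers $m^2\pi^2/L_{\max}^2$, $m = 1, 2, \dots$, to the list~\eqref{eq:DirichletEV}. In particular the values for $m = 1, \dots, k$ are $k$ elements of the Dirichlet spectrum, counted with multiplicity, each of which is at most $k^2\pi^2/L_{\max}^2$. Since $\lambda_k^{\rm D}(G)$ is by definition the $k$-th smallest entry of~\eqref{eq:DirichletEV}, the presence of at least $k$ entries not exceeding $k^2\pi^2/L_{\max}^2$ forces $\lambda_k^{\rm D}(G) \leq k^2\pi^2/L_{\max}^2$, as required.

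For the strictness claim I would simply invoke the second assertion of Theorem~\ref{thm:GD}: if $G$ contains two edges with rationally independent lengths, then~\eqref{eq:Friedlander} is strict for every $k$, and hence $\lambda_{k+1}(G) < \lambda_k^{\rm D}(G) \leq k^2\pi^2/L_{\max}^2$ for all $k \in \N$.

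I do not anticipate a genuine obstacle, since the statement is a direct corollary of the preceding theorem. The only point demanding a little care is the bookkeeping of multiplicities in the counting argument: one must count the $k$ values arising from $e_0$ as $k$ separate members of the multiset~\eqref{eq:DirichletEV}, which remains valid even when other edges happen to share some of these eigenvalues, and this is exactly why the nondecreasing enumeration $\lambda_k^{\rm D}(G)$ lands below the target bound.
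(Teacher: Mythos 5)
Your proposal is correct and matches the paper's intended argument: the corollary is stated as a direct consequence of Theorem~\ref{thm:GD}, obtained exactly by noting that the longest edge contributes the $k$ Dirichlet eigenvalues $m^2\pi^2/L_{\max}^2$, $m = 1,\dots,k$, to the multiset~\eqref{eq:DirichletEV}, so that $\lambda_k^{\rm D}(G) \leq k^2\pi^2/L_{\max}^2$, and the strictness claim is inherited from the second assertion of that theorem. Your care with multiplicities in the counting step is exactly the right point to flag, and nothing further is needed.
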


We remark that the estimate~\eqref{eq:firstEstimate} (and thus also the estimate in Theorem~\ref{thm:GD}) does not hold in general for a graph with cycles. For instance, for the graph formed by one vertex and one loop of length $L$ one has $\lambda_2 (G) = \frac{4 \pi^2}{L^2}$, which contradicts the estimate~\eqref{eq:firstEstimate} for $k = 1$ in this case. Furthermore, note that equality holds in~\eqref{eq:firstEstimate} if and only if $G$ is a single interval; this can be seen with the help of the monotonicity principle in Proposition~\ref{prop:monotonicity}. The following example shows that for $k = 1$ the estimate~\eqref{eq:firstEstimate} is sharp for $|E| = 3$. Analogous examples can be easily constructed for any $|E| \geq 2$.

\begin{example}
Consider a star graph $G$ consisting of three edges $e_1, e_2, e_3$ with lengths $L (e_1) = L (e_2) = \pi/n$ and $L (e_3) = \pi$ with a given, fixed even number $n \in \N$. A simple computation gives the eigenvalues of $- \Delta_G$. In fact, $\lambda \geq 0$ is an eigenvalue if and only if either
\begin{align}\label{eq:aha}
 \sqrt{\lambda} = n \Big(k + \frac{1}{2} \Big), \quad k \in \N,
\end{align} 
or 
\begin{align}\label{eq:interessant}
 \cos \Big(\sqrt{\lambda} \frac{\pi}{n} \Big) \sin \big(\sqrt{\lambda} \pi \big) + 2 \sin \Big(\sqrt{\lambda} \frac{\pi}{n} \Big) \cos \big(\sqrt{\lambda} \pi \big) = 0.
\end{align}
For any $n \geq 2$ the eigenvalues obtained through~\eqref{eq:aha} satisfy $\lambda \geq 1$. On the other hand, it can be shown that the equation~\eqref{eq:interessant} has exactly one solution $\lambda (n)$ in $(0, 1)$, which satisfies $\lambda (n) \to 1$ as $n \to \infty$. Thus 
\begin{align*}
 \lambda_2 (G) = \lambda (n) \to 1 = \frac{\pi^2}{L_{\max}^2} \quad \text{as} \quad n \to \infty.
\end{align*}
\end{example}

\bibliographystyle{amsplain}

\end{document}